\theoremstyle{plain}
\newtheorem{thm}{Theorem}[section]
\newtheorem{cor}[thm]{Corollary}
\newtheorem{prop}[thm]{Proposition}
\title[Pure cactus group of degree 3 and configuration space of 4 points]{A note on the pure cactus group of degree three and the configuration space of four points on the circle.}
\author{Takatoshi Hama}
\address{Graduate School of Integrated Basic Sciences, Nihon University,
3-25-40 Sakurajosui, Setagaya-ku, Tokyo 156-8550, Japan}
\email{chta23018@g.nihon-u.ac.jp}
\author{Kazuhiro Ichihara}
\address{Department of Mathematics, 
College of Humanities and Sciences, Nihon University,
3-25-40 Sakurajosui, Setagaya-ku, Tokyo 156-8550, Japan}
\email{ichihara.kazuhiro@nihon-u.ac.jp}
\begin{document}

\keywords{Cactus group, Cayley complex, configuration space}

\subjclass[2020]{20F65, 20F38, 05E18, 57M60}

\begin{abstract}
The cactus group was introduced by Henriques and Kamnitzer as an analogue of the braid group. 
In this note, we provide an explicit description of the relationship between the pure cactus group of degree three and the configuration space of four points on the circle. 
\end{abstract}

\maketitle

\section{Introduction}

It is known that the braid group acts naturally on multiple tensor products in braided categories. 
As an analogue of it for coboundary categories, the \textit{cactus group} $J_n$ was introduced in \cite{HENRIQUES-KAMNITZER} motivated by the study of Kashiwara crystals. 
See the next section for the (purely algebraic) definition of $J_n$ that we use in this paper. 

Also in \cite[Theorem 9]{HENRIQUES-KAMNITZER}, it is shown that the cactus group $J_n$ surjects onto the symmetric group $S_n$ of degree $n$, and its kernel, called the \textit{pure cactus group} $PJ_n$, is isomorphic to the fundamental group of the Deligne-Mumford compactification  $\overline{M}_0^{n+1}(\mathbb{R})$ 
of the moduli space of real genus 0 curves over $\mathbb{R}$ with $n + 1$ marked points, which is closely related to the configuration space of $n+1$ points on the circle. 

In this paper, we provide an explicit description of the relationship between the pure cactus group $PJ_3$ of degree three and 
the compactification of the configuration space of four points on the circle. 

For this purpose, we use the Cayley complex ${C_3}^{\{2\}}$ of a certain subgroup ${J_3}^{\{2\}}$ of $J_3$ on which $PJ_3$ actually acts. 
This action is originally given in \cite{genevois2022cactusgroupsviewpointgeometric}. 
See the next section for the definition of the subgroup ${J_3}^{\{2\}}$. 

We also use a purely combinatorial compactification $\overline{X(4)}$ of the configuration space $X(4)$ of four distinct points on the circle introduced by M. Yoshida \cite{MYos96}. 
Also, see \cite{KNY99} and \cite{MN00} for relevant research. 

Then our main result is the following. 
 
\begin{thm}\label{thm}
Let $\widetilde{X(4)}$ be the universal cover of the compactification $\overline{X(4)}$ of the configuration space $X(4)$ of four points on the circle endowed with the action $\widetilde{\Gamma}$ of the fundamental group $\pi_1 (\overline{X(4)} )$ of $\overline{X(4)}$ as covering transformations. 
Let ${C_3}^{\{ 2\}}$ be the Cayley complex of the subgroup $J_3^{\{2\}}$ of the cactus group $J_3$. 
Then there exist an action $\Gamma$ of $PJ_3$ on ${C_3}^{\{2\}}$ and a bijective equivariant map $\varphi$ between $\widetilde{X(4)}$ and ${C_3}^{\{2\}}$ with respect to $\widetilde{\Gamma}$ and $\Gamma$, 
i.e., for any $g \in PJ_3$, there exists $\tilde{g} \in \pi_1 (\overline{X(4)} )$ such that the following diagram commutes;
\[
\begin{CD}
\widetilde{X(4)} @>{\widetilde{\Gamma}_{\tilde{g}}}>>\widetilde{X(4)}\\
\varphi@VVV   @VVV\varphi \\
{C_3}^{\{2\}} @>{\Gamma_g}>> {C_3}^{\{2\}}
\end{CD}
\]
\end{thm}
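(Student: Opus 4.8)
The plan is to prove Theorem~\ref{thm} by making both complexes completely explicit as CW complexes and then constructing $\varphi$ cell by cell, checking equivariance on generators. First I would record the cellular data of ${C_3}^{\{2\}}$ from the explicit presentation of $J_3^{\{2\}}$ --- its vertices, edges and $2$-cells, the last indexed by the defining relators --- together with the combinatorial description of the action $\Gamma$ of $PJ_3$ on these cells coming from \cite{genevois2022cactusgroupsviewpointgeometric}. In parallel I would unwind Yoshida's combinatorial compactification $\overline{X(4)}$ of \cite{MYos96}: its cells are indexed by the degeneration types of four marked points on the circle, the top-dimensional ones recording cyclic orders and the lower ones the collision-and-bubbling patterns, so that the face relations among these indices present $\overline{X(4)}$ as an explicit finite CW complex, whose universal cover $\widetilde{X(4)}$ carries the deck action $\widetilde{\Gamma}$ of $\pi_1(\overline{X(4)})$.

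The second step is to identify the two groups in play and to organize the comparison. The relation $s_{13}s_{12}s_{13}=s_{23}$ shows $J_3\cong\langle s_{12},s_{13}\mid s_{12}^2=s_{13}^2=1\rangle$, the infinite dihedral group, and under the standard surjection $J_3\to S_3$, with $s_{12},s_{13}$ mapping to the transpositions $(1\,2),(1\,3)$, the kernel is $PJ_3=\langle (s_{12}s_{13})^3\rangle\cong\mathbb{Z}$; one also computes directly from Yoshida's model that $\pi_1(\overline{X(4)})$ is infinite cyclic, consistently with \cite[Theorem 9]{HENRIQUES-KAMNITZER}. I would then verify that $\Gamma$ is a free, properly discontinuous action of $PJ_3$ on ${C_3}^{\{2\}}$ --- freeness being automatic once it is proper, since $PJ_3$ is torsion-free --- so that ${C_3}^{\{2\}}\to{C_3}^{\{2\}}/\Gamma(PJ_3)$ is a covering with deck group $PJ_3$, and ${C_3}^{\{2\}}$ is simply connected, being a Cayley complex. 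It then suffices to exhibit a cellular isomorphism $\overline{X(4)}\cong{C_3}^{\{2\}}/\Gamma(PJ_3)$ whose induced map of fundamental groups carries a generator of $\pi_1(\overline{X(4)})$ to a generator of $PJ_3$: lifting it to universal covers produces the bijective cellular map $\varphi$, and the element $\tilde g\in\pi_1(\overline{X(4)})$ attached to a given $g\in PJ_3$ is the one corresponding to $g$ under the induced isomorphism of deck groups, which makes the square of the theorem commute.

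The substance of the argument is the construction of that cellular isomorphism, i.e.\ an explicit dictionary between Yoshida's degeneration types and the cells of the Cayley complex ${C_3}^{\{2\}}$: I would match each cyclic-order stratum of $\overline{X(4)}$ with an edge class of $J_3^{\{2\}}$, each collision-and-bubbling stratum with a relator $2$-cell (or a lower-dimensional class), and each maximally degenerate configuration with a vertex class, verifying at each stage that the matching respects the attaching maps on both sides and realizes the intended identification of fundamental groups. Equivalently one may bypass the quotients and build $\varphi$ directly on the universal covers: since $\widetilde{X(4)}$ and ${C_3}^{\{2\}}$ are both contractible complexes on which the respective groups $\pi_1(\overline{X(4)})$ and $PJ_3$, each infinite cyclic, act freely, properly and cocompactly, one defines $\varphi$ on a single fundamental domain by the dictionary and propagates it by the rule $\varphi\circ\widetilde{\Gamma}_{\tilde g_0}:=\Gamma_{g_0}\circ\varphi$ for fixed generators $g_0$ of $PJ_3$ and $\tilde{g}_0$ of $\pi_1(\overline{X(4)})$, checking only that the two prescriptions agree on the cells shared by adjacent translates of the fundamental domain.

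The main obstacle is exactly this dictionary: one must describe the action $\Gamma$ of \cite{genevois2022cactusgroupsviewpointgeometric} on the cells of ${C_3}^{\{2\}}$ in explicit combinatorial coordinates and identify those coordinates with Yoshida's bookkeeping of collisions and bubblings, simultaneously compatibly with the attaching maps and with both group actions. Once the correspondence on a fundamental domain is pinned down, the rest --- freeness and proper discontinuity of $\Gamma$, the covering-space lifting, the propagation of $\varphi$ over all of $\widetilde{X(4)}$, and the verification that $\varphi$ is a bijection --- is routine, and the commuting square for an arbitrary $g=g_0^k$ follows by iterating the generator relation $\varphi\circ\widetilde{\Gamma}_{\tilde g_0}=\Gamma_{g_0}\circ\varphi$ and taking $\tilde g=\tilde{g}_0^k$.
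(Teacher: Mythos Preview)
Your plan is sound but takes a different route from the paper's. The paper proceeds by brute force: it writes down $\varphi_0$ on vertices by explicit case-by-case formulas (depending on the parity of the index $k$), checks injectivity and surjectivity directly from those formulas, and then verifies the equivariance $\varphi_0\circ(\widetilde{\Gamma}_0)_{\lambda^j}=(\Gamma_0)_{(s_{12}s_{13})^{3j}}\circ\varphi_0$ by computing both sides on each of the three vertex types $[213]_i$, $[123]_i$, $[132]_i$. There is no passage to quotients and no covering-space argument; the map $\varphi$ is simply defined and checked. Your approach, by contrast, is structural: identify both groups as infinite cyclic, show the $PJ_3$-action on $C_3^{\{2\}}$ is a free, cocompact deck action on a simply connected complex, build a cellular isomorphism $\overline{X(4)}\cong C_3^{\{2\}}/\Gamma(PJ_3)$, and lift. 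This is conceptually cleaner and is the template one would want for higher $n$.

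One correction, though: you are overcounting dimensions. For $n=3$ the space $\overline{X(4)}$ is a circle, and the paper works with its \emph{dual} cell structure, so there are exactly three $0$-cells $[213],[123],[132]$ and three $1$-cells, nothing more. Likewise $J_3^{\{2\}}=\langle s_{12},s_{23}\mid s_{12}^2=s_{23}^2=1\rangle\cong\mathbb{Z}_2*\mathbb{Z}_2$, so its Cayley complex is (up to the degenerate bigons from $s_{12}^2,s_{23}^2$) a line; there are no nontrivial relator $2$-cells to match with ``collision-and-bubbling strata''. Your dictionary therefore collapses to: the three $PJ_3$-orbits of vertices in $C_3^{\{2\}}$ (word length mod $3$, since the generator of $PJ_3$ shifts word length by $3$) correspond to the three vertices of the dual of $\overline{X(4)}$, and the three edge-orbits to the three edges. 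Once this is seen, the quotient isomorphism is immediate and the lift gives $\varphi$; what the paper gains by its explicit computation is a concrete formula for $\varphi$, while what your approach gains is an explanation of \emph{why} such a $\varphi$ must exist.
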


It then implies the following. 

\begin{cor}\label{cor}
The pure cactus group $PJ_3$ of degree three is isomorphic to the fundamental group $\pi_1 (\overline{X(4)} )$ of the compactification $\overline{X(4)}$ of the configuration space of four points on the circle. 
\end{cor}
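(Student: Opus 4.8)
The plan is to upgrade the commuting diagram of Theorem~\ref{thm} into an explicit isomorphism $\Phi\colon PJ_3\to\pi_1(\overline{X(4)})$. I will use three facts that are available in the setting of Theorem~\ref{thm}: the universal covering $\widetilde{X(4)}\to\overline{X(4)}$ is regular, so $\widetilde\Gamma$ identifies $\pi_1(\overline{X(4)})$ with the group of covering transformations and is in particular a free, hence faithful, action; the action $\Gamma$ of $PJ_3$ on ${C_3}^{\{2\}}$ (the action introduced in \cite{genevois2022cactusgroupsviewpointgeometric}) is faithful, indeed free; and ${C_3}^{\{2\}}$, being a Cayley complex, is simply connected.

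First I would construct $\Phi$. For $g\in PJ_3$, bijectivity of $\varphi$ rewrites the commuting square as $\widetilde\Gamma_{\tilde g}=\varphi^{-1}\circ\Gamma_g\circ\varphi$, so faithfulness of $\widetilde\Gamma$ determines $\tilde g$ uniquely; set $\Phi(g):=\tilde g$. From $\Gamma_{gh}=\Gamma_g\circ\Gamma_h$ and conjugation by $\varphi$ one gets $\widetilde\Gamma_{\Phi(gh)}=\widetilde\Gamma_{\Phi(g)}\circ\widetilde\Gamma_{\Phi(h)}=\widetilde\Gamma_{\Phi(g)\Phi(h)}$, and faithfulness of $\widetilde\Gamma$ then gives $\Phi(gh)=\Phi(g)\Phi(h)$, so $\Phi$ is a homomorphism. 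If $\Phi(g)=1$ then $\Gamma_g=\varphi\circ\widetilde\Gamma_1\circ\varphi^{-1}=\mathrm{id}$, and freeness of $\Gamma$ forces $g=1$, so $\Phi$ is injective. This part is formal diagram-chasing once the two faithfulness statements are granted.

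The remaining step --- surjectivity of $\Phi$ --- is the one I expect to be the main obstacle, since it is the only place where one needs more about $\varphi$ than that it is an equivariant bijection. Equivalently, one must show that $\Gamma(PJ_3)$ equals the conjugated covering-transformation group $\varphi\circ\widetilde\Gamma\big(\pi_1(\overline{X(4)})\big)\circ\varphi^{-1}$ inside the homeomorphism group of ${C_3}^{\{2\}}$; one inclusion is exactly the commuting square of Theorem~\ref{thm}, and the reverse inclusion is what must be checked. I would obtain it from the explicit form of $\varphi$ built in the proof of Theorem~\ref{thm}: fixing a base vertex $\tilde v\in\widetilde{X(4)}$ and putting $v:=\varphi(\tilde v)$, the orbit maps $\tilde h\mapsto\widetilde\Gamma_{\tilde h}(\tilde v)$ and $g\mapsto\Gamma_g(v)$ are bijections from $\pi_1(\overline{X(4)})$, respectively $PJ_3$, onto the fiber of $\widetilde{X(4)}\to\overline{X(4)}$ over the image of $\tilde v$, respectively the $PJ_3$-orbit of $v$; since the construction of $\varphi$ matches that fiber with that orbit, each $\widetilde\Gamma_{\tilde h}(\tilde v)$ equals $\varphi^{-1}(\Gamma_g(v))$ for some $g\in PJ_3$, and freeness of both actions propagates this one-point coincidence to $\varphi\circ\widetilde\Gamma_{\tilde h}\circ\varphi^{-1}=\Gamma_g$, i.e.\ $\Phi(g)=\tilde h$. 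Put differently, one checks that $\varphi$ descends to a homeomorphism $\overline{X(4)}\cong {C_3}^{\{2\}}/PJ_3$ and then reads off $\pi_1(\overline{X(4)})\cong\pi_1\big({C_3}^{\{2\}}/PJ_3\big)\cong PJ_3$ from simple-connectivity of ${C_3}^{\{2\}}$ together with freeness and proper discontinuity of $\Gamma$. Either way, $\Phi$ is then the asserted isomorphism.
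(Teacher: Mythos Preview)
Your argument is correct, and the homomorphism step is essentially identical to the paper's: both define the map $g\mapsto\tilde g$ via the commuting square and deduce multiplicativity from $\Gamma_{g_1g_2}=\Gamma_{g_1}\circ\Gamma_{g_2}$ together with faithfulness of $\widetilde\Gamma$. The difference lies in how bijectivity is obtained. The paper simply writes down the explicit assignment $(s_{12}s_{13})^{3j}\mapsto\lambda^j$, writes down the inverse assignment $\lambda^j\mapsto(s_{12}s_{13})^{3j}$, and checks in the same way that the latter is a homomorphism; since the two are visibly mutual inverses on generators of infinite cyclic groups, bijectivity is immediate. Your route instead separates injectivity (freeness of $\Gamma$) from surjectivity (fiber/orbit matching, or equivalently the induced homeomorphism on quotients plus simple connectivity of ${C_3}^{\{2\}}$).

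Your approach is more conceptual and would transport verbatim to situations where neither group is known in advance to be cyclic; the paper's approach exploits the very concrete presentations $PJ_3=\langle(s_{12}s_{13})^3\rangle$ and $\pi_1(\overline{X(4)})=\langle\lambda\rangle$ already established earlier, which makes bijectivity a one-line observation. The price you pay is that your surjectivity step is not self-contained from the statement of Theorem~\ref{thm} alone: you correctly flag that one must go back into the explicit definition of $\varphi$ to see that the $\pi_1$-fiber over the basepoint is carried onto the $PJ_3$-orbit of $\varphi(\tilde v)$ (equivalently, that $\varphi$ descends to the quotients). That verification is straightforward here---from Proposition~\ref{prop} the $PJ_3$-orbit of a vertex consists exactly of the vertices whose word length differs by a multiple of $3$, and $\varphi_0$ sends $\{[abc]_k:k\in\mathbb{Z}\}$ precisely to such a set---but it is an extra step the paper avoids by using the explicit generators.
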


Actually, it is known that $PJ_3$ is isomorphic to $\mathbb{Z}$ (see \cite{BCL24} for example) and also that the compactification $\overline{X(4)}$ of $X(4)$ is homeomorphic to the circle $S^1$ \cite{MYos96}. 
Thus, the last assertion is readily trivial. 
However, our constructions of the action of $PJ_3$ and the bijective equivariant map $\varphi$ can be regarded as an instructive example to study such actions of the pure cactus groups of higher degrees.

\section{Definitions}
\subsection{Cactus group}\label{subsec21}
    For a given integer $n\geq2$, the cactus group of degree $n$, denoted by $J_n$, 
    can be presented by the generators $s_{p,q}$ with $1\leq p < q \leq n$ and following relations.
\begin{itemize}
   \item $s_{p,q}^2 = 1$ for every $1 \leq p < q \leq n$,
   \item $s_{p,q}s_{m,r} = s_{m,r}s_{p,q}$ for all $1 \leq p < q \leq n$ and $1 \leq m < r \leq n$ satisfying $[p, q] \cap [m, r] = \varnothing$,
   \item $s_{p,q}s_{m,r} = s_{p+q-r,p+q-m}s_{p,q}$ for all $1 \leq p < q \leq n$ and $1 \leq m < r \leq n$ satisfying $[m, r] \subset [p, q]$.
\end{itemize}
Here, $[p,q]$ denotes the set $\{ p , p+1, \dots, q-1, q\}$ of integers for positive integers $p,q$ with $p<q$. 

    The cactus group $J_n$ admits a natural projection onto the symmetric group $S_n$ of degree $n$, and its kernel is called the \textit{pure cactus group} of degree $n$, denoted by $PJ_n$. 
    See \cite[Subsection 3.1]{HENRIQUES-KAMNITZER} or \cite[Section 1]{genevois2022cactusgroupsviewpointgeometric} for more details. 
    In particular, by \cite[Corollary 5.3]{BCL24}, $PJ_3$ has the following presentation.
\begin{align*}
  \langle \;(s_{12}s_{13})^{3} \;\rangle 
\end{align*}
That is, $PJ_3$ is the free abelian subgroup generated by $(s_{12}s_{13})^{3}$ in $J_3$. 
    
    For each integer $n\geq2$ and subset $S\subset[2, n]$, let ${J_n}^S$ be the subgroup of the cactus group $J_n$ of degree $n$ generated by the elements $s_{p,q}$ for $1 \leq p < q \leq n$ and $q-p+1\in S $ and defined by the following relations.
\begin{itemize}
   \item $s_{p,q}^2 = 1$ for every $1 \leq p < q \leq n$ satisfying $q-p+1\in S$,
   \item $s_{p,q}s_{m,r} = s_{m,r}s_{p,q}$ for every $1 \leq p < q \leq n$ and $1 \leq m < r \leq n$ satisfying $[p, q] \cap [m, r] = \varnothing$ and $q-p+1\in S$,
   \item $s_{p,q}s_{m,r} = s_{p+q-r,p+q-m}s_{p,q}$ for every $1 \leq p < q \leq n$ and $1 \leq m < r \leq n$ satisfying $[m, r] \subset [p, q]$ and $q-p+1\in S$.
\end{itemize}
    We denote the Cayley complex of the cactus group $J_n$
    and the subgroup ${J_n}^S$ by $C_n$ and ${C_n}^S$, respectively. 
    See \cite[Section 5]{genevois2022cactusgroupsviewpointgeometric} for more details. 

In particular, $J_3$ and $J_3^{\{2\}}$ admit the following presentations: 
\begin{align*}
    J_3 \ =& \langle s_{12}, s_{23}, s_{13}\mid {s_{12}}^2={s_{23}}^2={s_{13}}^2=1,  s_{12}s_{13}=s_{13}s_{23},s_{23}s_{13}=s_{13}s_{12} \rangle\\
    = & \langle s_{12}, s_{13}\mid {s_{12}}^2={s_{13}}^2=1\rangle \cong \mathbb{Z}_2 * \mathbb{Z}_2\\
    J_3^{\{2\}} =& \langle s_{12}, s_{23}\mid {s_{12}}^2={s_{23}}^2=1\rangle \cong J_3
\end{align*}
See \cite[Appendix A]{BCL24} for mote details. 


\subsection{Configuration space}
    For an integer $n\geq3$, the configuration space $X(n)$ of $n$ distinct points on the circle (regarded as the real projective line $P^1$) is defined by 
    \begin{align*}
    X(n)=PGL(2)\backslash\{(P^1)^n-\Delta\}   
    \end{align*}
    where $\Delta=\{(x_1,\cdots, x_n) \mid x_i = x_j\ \text{for some} \ i \neq j\ \}$ and the projective general liner group $PGL(2)$ acts diagonally and freely. 
    Then, the following natural purely combinatorial compactification $\overline{X(4)}$ of $X(4)$ was introduced in \cite{MYos96} and which we consider in this paper. 
    
    First, note that $X(4)$ has just three connected components, those are actually (1-dimensional) open intervals, coded by the figures in Figure~\ref{fig.1}. 
    We denote them by $[213]$, $[123]$, and $[132]$, respectively, as in the figure. 
    \begin{figure}[htb]
        \centering
        \includegraphics[width=0.9\linewidth]{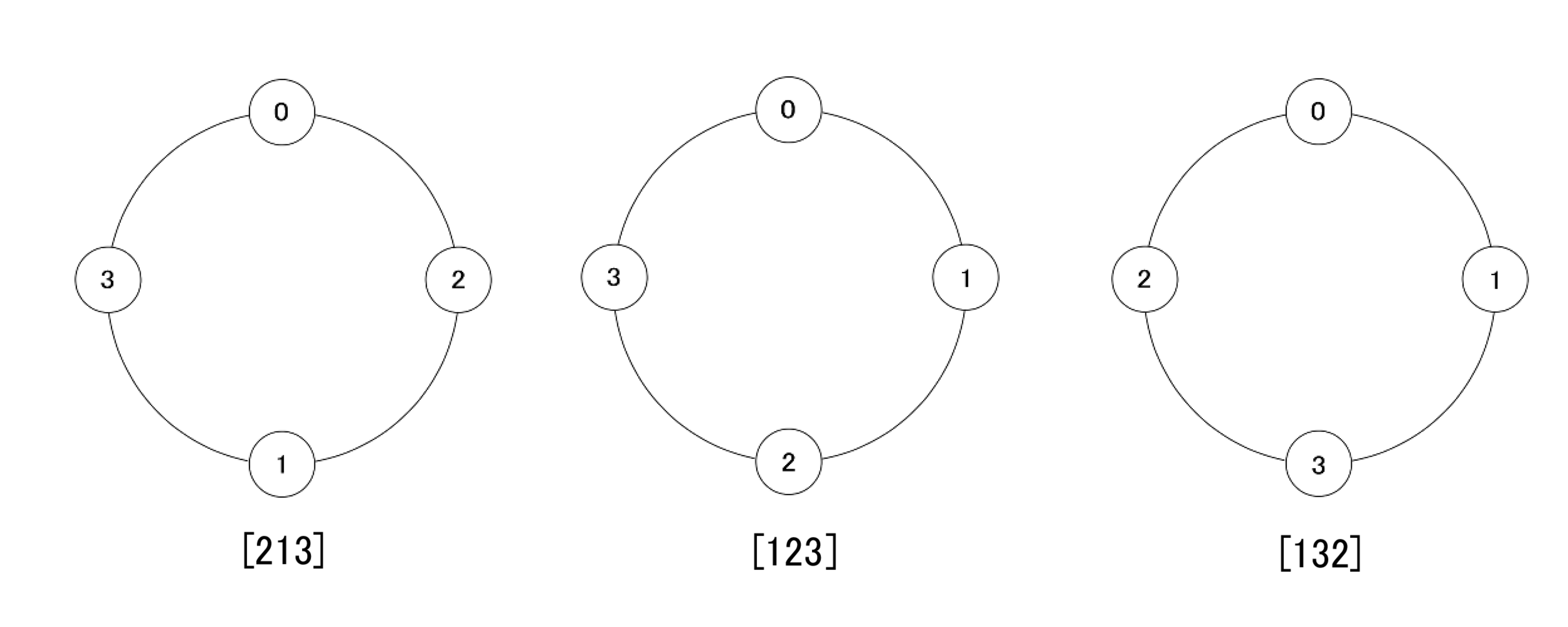}
        \caption{Representatives of three connected components of $X(3)$}
        \label{fig.1}
    \end{figure}

    Then, by regarding $X(4)$ is isomorphic to $P^1 - \{\text{three distinct points} \}$, 
we can uniquely compactify $X(4)$ by adding three points that represent configurations where exactly two adjacent points coincide. 
    We denote this compactification by $\overline{X(4)}$, which is naturally endowed with a complex structure of cells. 
    Thus, it can be regarded with a $1-$dimensional cell complex that is homeomorphic to the circle $S^1$. 
    In this paper, we consider the dual cell complex structure of the compactification $\overline{X(4)}$, which is visualized in Figure~\ref{fig:enter-label}.
    \begin{figure}[htb]
        \centering
        \includegraphics[width=0.6\linewidth]{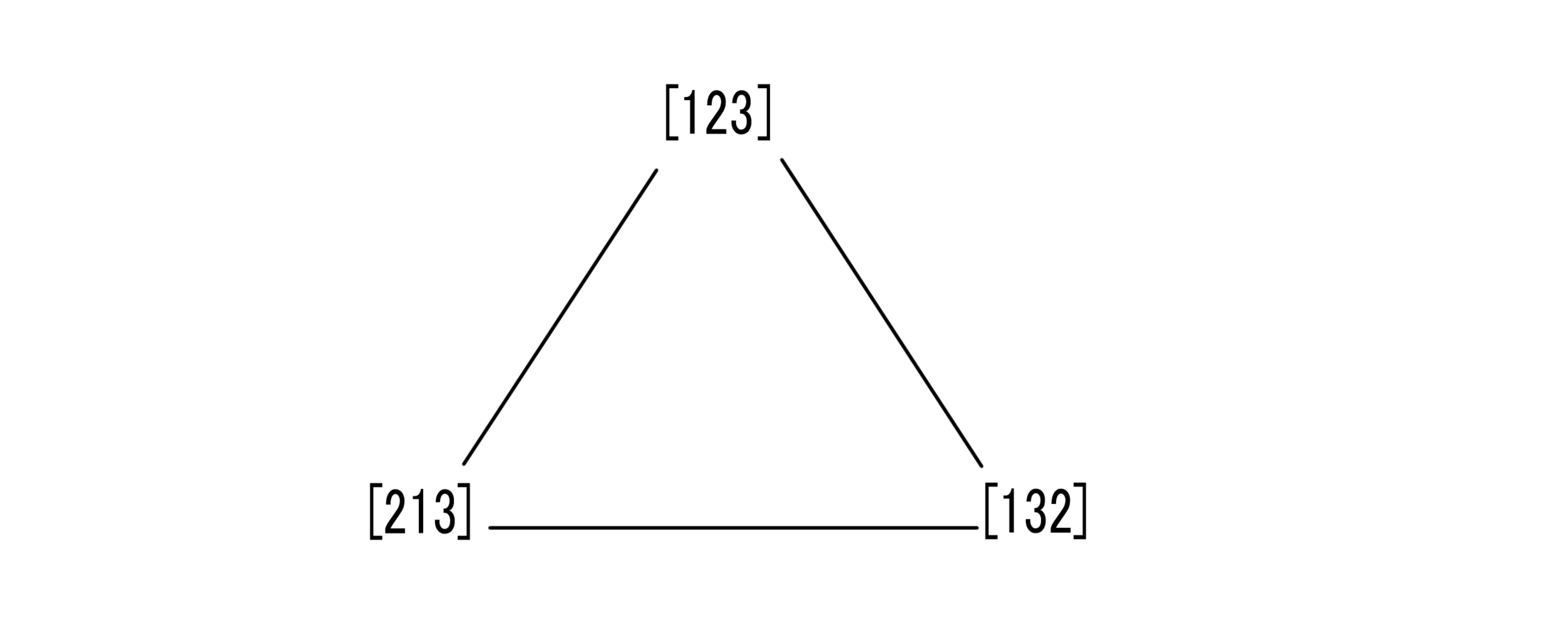}
        \caption{Dual cell complex structure of $\overline{X(4)}$}
        \label{fig:enter-label}
    \end{figure}

Then we see that the fundamental group $\pi_1 (\overline{X(4)})$ is isomorphic to $\mathbb{Z} \cong \langle \lambda \rangle$ generated by the element $\lambda$ represented by the loop given as $[123] \to [132] \to [213] \to [123]$. 

Now we consider the universal covering space $\widetilde{X(4)}$ of $\overline{X(4)}$. 
It naturally admits a cell complex structure lifted from that of $\overline{X(4)}$. 
Then the $0$-skeleton (vertices) are coded as 
    \begin{align*}
        \left( \widetilde{X(4)} \right)^{(0)} =\left\{ \; [a,b,c]_k \mid 
        (a,b,c) \in \{(1,3,2), (1,2,3), (2,1,3)\} \; \right\} , 
    \end{align*}
and $\widetilde{X(4)}$ can be expressed as the following: 
\begin{equation*}
\cdots \text{ --- } [123]_{-1} \text{ --- } [132]_{-1} \text{ --- } [213]_{0} \text{ --- } [123]_{0}  \text{ --- } [132]_{0}  \text{ --- } [213]_{1} \text{ --- } [123]_{1} \text{ --- } \cdots 
\end{equation*}

Under this setting, the fundamental group $\pi_1 (\overline{X(4)}) \cong \langle \lambda \rangle$ acts on $\widetilde{X(4)}$ as the covering transformations obtained by the natural extension of 
\begin{align*}
\widetilde{\Gamma}_0: \pi_1 (\overline{X(4)}) \times \left( \widetilde{X(4)} \right)^{(0)} & \longrightarrow \left( \widetilde{X(4)} \right)^{(0)} 
\end{align*}
defined by 
\begin{align*}
\widetilde{\Gamma}_{0} ( \lambda^k , [a,b,c]_n ) = \left( \widetilde{\Gamma}_{0} \right)_{ \lambda^k } ([a,b,c]_n):=[a,b,c]_{n+k}
\end{align*}
for an element $\lambda^k$ of $\pi_1 (\overline{X(4)})$ and an element $[a,b,c]_n$ of $\left( \widetilde{X(4)} \right)^{(0)}$ with integers $n$, $k$ and $(a, b, c) \in \{(2,1,3),(1,2,3),(1,3,2)\}$. 
  
        
\section{Action of $PJ_3$ on $C_3^{\{ 2 \}}$}

In \cite{genevois2022cactusgroupsviewpointgeometric}, an action of $PJ_3$ on $\left( {C_3}^{\{2\}} \right)^{(0)}$ given in the next proposition is essentially obtained. 

\begin{prop}\label{prop}
Let $\left( {C_3}^{\{2\}} \right)^{(0)}$ denote the 0-skeleton of the Cayley complex ${C_3}^{\{2\}}$, that is actually identified with $J_3^{\{2\}}$. 
     The map 
    \begin{align*}
        \Gamma_0: PJ_3 \times \left( {C_3}^{\{2\}} \right)^{(0)}  &\longrightarrow \left( {C_3}^{\{2\}} \right)^{(0)} 
    \end{align*}   
    defined by 
        \begin{align*}
        (g, h) \mapsto (\Gamma_{0} )_{g} (h):=
        \begin{cases}    
            gh & \text{if $gh$ is an element of ${J_3}^{\{2\}}$} \\
            ghs_{13} & \text{if $gh$ is not an element of ${J_3}^{\{2\}}$}
        \end{cases}
    \end{align*}
    induces a group action of $PJ_3$ on $\left( {C_3}^{\{2\}} \right)^{(0)}$. 
\end{prop}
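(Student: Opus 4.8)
The plan is to collapse the two-case definition of $\Gamma_0$ into a single formula governed by one parity homomorphism. First I would introduce $\epsilon \colon J_3 \to \mathbb{Z}_2$, defined on the two-generator presentation $J_3 = \langle s_{12}, s_{13} \mid s_{12}^2 = s_{13}^2 = 1 \rangle$ by $\epsilon(s_{12}) = 0$ and $\epsilon(s_{13}) = 1$; this is well-defined because the defining relations are respected. The subgroup $J_3^{\{2\}} = \langle s_{12}, s_{23} \rangle = \langle s_{12}, s_{13}s_{12}s_{13} \rangle$ of $J_3$ is then exactly $\ker \epsilon$: it is contained in $\ker \epsilon$ since $\epsilon(s_{13}s_{12}s_{13}) = 0$, and a Reidemeister--Schreier argument (or \cite[Appendix A]{BCL24}) shows the inclusion is an equality, both sides having index $2$ in $J_3$. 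Note also that $\epsilon\bigl((s_{12}s_{13})^3\bigr) = 1$, so $\epsilon$ restricts to a surjection $PJ_3 \cong \mathbb{Z} \twoheadrightarrow \mathbb{Z}_2$; in particular $PJ_3 \not\subseteq J_3^{\{2\}}$, which is exactly why the correction term $s_{13}$ appears in the definition.

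With $\epsilon$ in hand, I would rewrite $\Gamma_0$ in closed form. For $g \in PJ_3$ and $h \in J_3^{\{2\}} = \ker\epsilon$ we have $\epsilon(gh) = \epsilon(g)$, so $gh \in J_3^{\{2\}}$ if and only if $\epsilon(g) = 0$, and the definition becomes
\[
(\Gamma_0)_g(h) \;=\; g\, h\, s_{13}^{\,\epsilon(g)} \qquad (\text{with the convention } s_{13}^{\,0} = 1),
\]
covering both cases at once. Since $\epsilon\bigl(g h s_{13}^{\,\epsilon(g)}\bigr) = 2\,\epsilon(g) = 0$, the output lies in $\ker\epsilon = \bigl({C_3}^{\{2\}}\bigr)^{(0)}$, so $\Gamma_0$ is indeed well-defined as a map into the $0$-skeleton; and $(\Gamma_0)_e = \mathrm{id}$ because $\epsilon(e) = 0$.

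The substantive step is the composition law. Given $g_1, g_2 \in PJ_3$ and $h \in J_3^{\{2\}}$, the element $h' := (\Gamma_0)_{g_2}(h) = g_2 h\, s_{13}^{\,\epsilon(g_2)}$ again lies in $\ker\epsilon$, so the closed form applies to it and
\[
(\Gamma_0)_{g_1}\bigl( (\Gamma_0)_{g_2}(h) \bigr) \;=\; g_1\, h'\, s_{13}^{\,\epsilon(g_1)} \;=\; g_1 g_2\, h\, s_{13}^{\,\epsilon(g_2)}\, s_{13}^{\,\epsilon(g_1)}.
\]
Using $s_{13}^{\,2} = 1$ together with the additivity of $\epsilon$ gives $s_{13}^{\,\epsilon(g_2)} s_{13}^{\,\epsilon(g_1)} = s_{13}^{\,\epsilon(g_1) + \epsilon(g_2)} = s_{13}^{\,\epsilon(g_1 g_2)}$, so the right-hand side equals $(\Gamma_0)_{g_1 g_2}(h)$. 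Hence $(\Gamma_0)_{g_1} \circ (\Gamma_0)_{g_2} = (\Gamma_0)_{g_1 g_2}$; combined with $(\Gamma_0)_e = \mathrm{id}$, this shows each $(\Gamma_0)_g$ is a bijection with inverse $(\Gamma_0)_{g^{-1}}$, and that $\Gamma_0$ is a group action of $PJ_3$ on $\bigl({C_3}^{\{2\}}\bigr)^{(0)}$. I expect the only genuine subtlety to be this last bit of bookkeeping — that the $s_{13}$-exponents add correctly under composition — which amounts to a cocycle-type identity for $g \mapsto s_{13}^{\,\epsilon(g)}$ and is forced by $s_{13}^{\,2} = 1$ and $\epsilon$ being a homomorphism; everything else is formal.
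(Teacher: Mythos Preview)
Your proof is correct and takes a genuinely different, more conceptual route than the paper. The paper argues by explicit computation: it writes each $h \in J_3^{\{2\}}$ in a normal form indexed by its word length $m$, rewrites each $g = (s_{12}s_{13})^{3k} \in PJ_3$ in the generators $s_{12}, s_{23}$ (picking up a trailing $s_{13}$ exactly when $k$ is odd), multiplies out $gh$ in four cases according to the parities of $k$ and $m$, and then observes that $(\Gamma_0)_g$ sends $h$ to the vertex of word length $m+3k$; the group-action axioms are read off from this ``shift by $3k$'' description. You instead recognise $J_3^{\{2\}}$ as $\ker\epsilon$ for the parity homomorphism $\epsilon$, collapse the two-case definition to the single closed formula $(\Gamma_0)_g(h) = gh\, s_{13}^{\epsilon(g)}$, and verify the axioms directly from $s_{13}^{2}=1$ together with the additivity of $\epsilon$. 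Your argument is cleaner and exposes the mechanism (it is essentially the left action of $J_3$ on the coset space $J_3/\langle s_{13}\rangle$, restricted to $PJ_3$ and transported along the section $J_3^{\{2\}}\hookrightarrow J_3$); the paper's computation is heavier but has the side benefit of producing the explicit shift-by-$3k$ formula, which it reuses verbatim in the proof of Theorem~\ref{thm}.
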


\begin{proof}
As seen in Subsection~\ref{subsec21}, we can describe each element $h$ in $J_3^{\{2\}}$ with generators $s_{12}, s_{23}$ and integer $m$ as follows:
    \begin{align}\label{eq31}
        h= 
        \begin{cases}
        (s_{12}s_{23})^{\frac{m}{2}} & \text{if $m$ is even}  \\(s_{12}s_{23})^{\frac{m-1}{2}}s_{12} & \text{if $m$ is odd}
        \end{cases}
    \end{align}
 On the other hand, as seen in Subsection~\ref{subsec21}, 
 $PJ_3$ has the following presentation.
 \begin{align*}
   \langle \;(s_{12}s_{13})^{3} \;\rangle 
 \end{align*}
Thus every $g$ in $PJ_3$ can be presented as $g=(s_{12}s_{13})^{3k}$ for some integer $k$. 
This element $g$ can be rewritten with generators $s_{12}$ and $s_{23}$ using the relations of $J_3$ so that: 
    \begin{align*}
        g=&
         \begin{cases}       (s_{12}s_{13}s_{12}s_{13})(s_{12}s_{13}s_{12}s_{13})\cdots (s_{12}s_{13}s_{12}s_{13})  & \text{if $k$ is even}    \\(s_{12}s_{13}s_{12}s_{13})(s_{12}s_{13}s_{12}s_{13})\cdots(s_{12}s_{13}s_{12}s_{13})s_{12}s_{13} & \text{if $k$ is odd}
        \end{cases}\\
        =&
        \begin{cases}       (s_{12}s_{23}{s_{13}}^2)(s_{12}s_{23}{s_{13}}^2)\cdots (s_{12}s_{23}{s_{13}}^2)  & \text{if $k$ is even}    \\(s_{12}s_{23}{s_{13}}^2)(s_{12}s_{23}{s_{13}}^2)
        \cdots(s_{12}s_{23}{s_{13}}^2)s_{12}s_{13} & \text{if $k$ is odd}
        \end{cases}\\
        =&
        \begin{cases}(s_{12}s_{23})^{\frac{3k}{2}} & \text{if $k$ is even}    \\(s_{12}s_{23})^{\frac{3k-1}{2}}s_{12}s_{13} & \text{if $k$ is odd}
        \end{cases}
    \end{align*}

Here, in order to show that $\Gamma_0$ induces a group action of $PJ_3$ on $\left( {C_3}^{\{2\}} \right)^{(0)}$, it suffices to confirm that, for all $g=(s_{12}s_{13})^{3k}$ in $PJ_3$ and all $h$ in $J_3^{\{2\}}$, $(\Gamma_{0})_{g}$ shifts $h$ to just $3k$ next vertex.

In fact, by simple calculation, the following holds for every $g=(s_{12}s_{13})^{3k}$ in $PJ_3$ and every $h$ in $J_3^{\{2\}}$.
\begin{align*}
gh=
\begin{cases}
(s_{12}s_{23})^{\frac{3k+m}{2}} & \text{if $k$, $m$ are even} \\
(s_{12}s_{23})^{\frac{3k+m-1}{2}}s_{12}s_{13} &\text{if $k$ is odd, and $m$ is even}\\
(s_{12}s_{23})^{\frac{3k+m-1}{2}}s_{12} &\text{if $k$ is even, and $m$ is odd}\\
(s_{12}s_{23})^{\frac{3k+m}{2}}s_{13} &\text{if $k$, $m$ are odd}
\end{cases} \\
= \begin{cases}
(s_{12}s_{23})^{\frac{3k+m}{2}} & \text{if $k$, $m$ are even} \\
(s_{12}s_{23})^{\frac{3k+m-1}{2}}s_{12}s_{13} &\text{if $k$ is odd, and $m$ is even}\\
(s_{12}s_{23})^{\frac{3k+m-1}{2}}s_{12} &\text{if $k$ is even, and $m$ is odd}\\
(s_{12}s_{23})^{\frac{3k+m}{2}}s_{13} &\text{if $k$, $m$ are odd}
\end{cases}
\end{align*}
Here the element $h$ is assumed to be presented as \eqref{eq31}. 

Because $s_{13}$ is not an element in $J_3^{\{2\}}$, we can see that $(s_{12}s_{23})^{\frac{3k+m-1}{2}}s_{12}s_{13}$ and $(s_{12}s_{23})^{\frac{3k+m}{2}}s_{13}$ are not elements in $J_3^{\{2\}}$. 
Thus, we obtain the following. 
\begin{align*}
    \left( \Gamma_{0} \right)_{g} (h) =
\begin{cases}
(s_{12}s_{23})^{\frac{3k+m}{2}} & \text{if $k$, $m$ are even} \\
(s_{12}s_{23})^{\frac{3k+m-1}{2}}s_{12} &\text{if $k$ is odd, and $m$ is even}\\
(s_{12}s_{23})^{\frac{3k+m-1}{2}}s_{12} &\text{if $k$ is even, and $m$ is odd}\\
(s_{12}s_{23})^{\frac{3k+m}{2}} &\text{if $k$, $m$ are odd}
\end{cases}
\end{align*}
The word length of each element above is $3k+m = \lvert h \rvert +3k$, where $\lvert h \rvert$ denotes the word length of the element $h$. 
This implies that $(\Gamma_{0})_{g}$ shifts $h$ to just $3k$ next vertex for all $(s_{12}s_{13})^{3k}$ in $PJ_3$ and all $h$ in $J_3^{\{2\}}$ as desired. 
\end{proof}

Note that a group action $\Gamma$ of $PJ_3$ on ${C_3}^{\{2\}}$ can be obtained by extending the action $\Gamma_0$ naturally.

\section{Proofs}

In this section, we give proofs of Theorem~\ref{thm} and Corollary~\ref{cor}. 

\begin{proof}[Proof of Theorem~\ref{thm}]
Let $\varphi$ be the map from $\widetilde{X(4)}$ to $C_{3}^{\{2\}}$ 
obtained by naturally extending the map 
\begin{align*}
\varphi_0 : \left( \widetilde{X(4)} \right)^{(0)} & \longrightarrow \left( C_{3}^{\{2\}} \right)^{(0)} = J_3^{\{2\}}
\end{align*}
defined by the following: 
\begin{align*}
[213]_k &\longmapsto 
\begin{cases}
(s_{12}s_{23})^{\frac{3k}{2}}s_{12} & \text{if $k$ is even} \\
(s_{12}s_{23})^{\frac{3k+1}{2}} &\text{if $k$ is odd}
\end{cases}\\
[123]_k &\longmapsto 
\begin{cases}
(s_{12}s_{23})^{\frac{3k}{2}} & \text{if $k$ is even} \\
(s_{12}s_{23})^{\frac{3k-1}{2}}s_{12} &\text{if $k$ is odd}
\end{cases} \\
[132]_k &\longmapsto
\begin{cases}
(s_{12}s_{23})^{\frac{3k-2}{2}}s_{12} & \text{if $k$ is even} \\
(s_{12}s_{23})^{\frac{3k-1}{2}} &\text{if $k$ is odd}
\end{cases}
\end{align*}

Let $\Gamma_0$ be the actions of $PJ_3$ on $\left( C_{3}^{\{2\}} \right)^{(0)} = J_3^{\{2\}}$ defined in the previous section, and $\widetilde{\Gamma}_0$ the actions of $\pi_1 (\overline{X(4)})$ on $\left( \widetilde{X(4)} \right)^{(0)}$ as covering transformations. 
Then, to prove the theorem, it suffices to show that $\varphi_0$ is bijective and, for any $g \in PJ_3$, there exists $\tilde{g} \in \pi_1 (\overline{X(4)} )$ such that the following diagram commutes;
\[
\begin{CD}
\left( \widetilde{X(4)} \right)^{(0)} @>{\left( \widetilde{\Gamma_0} \right)_{\tilde{g}}}>> \left( \widetilde{X(4)} \right)^{(0)}\\
\varphi_0@VVV   @VVV\varphi_0 \\
J_3^{\{2\}} @>{\left( \Gamma_0 \right)_g }>> J_3^{\{2\}}
\end{CD}
\]

We first show that $\varphi_0$ is bijective. 
By construction of $\varphi_0$, together with the presentation of $J_3^{\{2\}}$ given in Subsection~\ref{subsec21}, it is clear that $\varphi_0$ is injective.
For an element $g$ in $J_3^{\{2\}} = \left(C_{3}^{\{2\}} \right)^{(0)} $, if the word length $\lvert g \rvert$ of $g$ is congruent to 1 modulo 3, we provide an element $[213]_{\frac{\lvert g \rvert-1}{3}}$ of $\left( \widetilde{X(4)} \right)^{(0)}$.
If the word length $\lvert g \rvert$ of $g$ is congruent to 0 modulo 3, we provide $[123]_{\frac{\lvert g \rvert}{3}}$.
If the word length $\lvert g \rvert$ of $g$ is congruent to $-1$ modulo 3, we provide $[132]_{\frac{\lvert g \rvert+1}{3}}$.
\noindent
Then, we obtain the following. 
\begin{align*}
\varphi_0([213]_{\frac{\lvert g \rvert-1}{3}})=&
\begin{cases}
(s_{12}s_{23})^{\frac{3\frac{\lvert g \rvert-1}{3}}{2}}s_{12} & \text{if $\frac{\lvert g \rvert-1}{3}$ is even} \\
(s_{12}s_{23})^{\frac{3\frac{\lvert g \rvert-1}{3}+1}{2}} &\text{if $\frac{\lvert g \rvert-1}{3}$ is odd}
\end{cases}\\
=&
\begin{cases}
(s_{12}s_{23})^{\frac{\lvert g \rvert-1}{2}}s_{12} & \text{if $\frac{\lvert g \rvert-1}{3}$ is even} \\
(s_{12}s_{23})^{\frac{\lvert g \rvert}{2}} &\text{if $\frac{\lvert g \rvert-1}{3}$ is odd}
\end{cases} \\
\varphi_0([123]_{\frac{\lvert g \rvert}{3}})=&
\begin{cases}
(s_{12}s_{23})^{\frac{3\frac{\lvert g \rvert}{3}}{2}} & \text{if $\frac{\lvert g \rvert}{3}$ is even} \\
(s_{12}s_{23})^{\frac{3\frac{\lvert g \rvert}{3}-1}{2}}s_{12} &\text{if $\frac{\lvert g \rvert}{3}$ is odd}
\end{cases}\\
=&
\begin{cases}
(s_{12}s_{23})^{\frac{\lvert g \rvert}{2}} & \text{if $\frac{\lvert g \rvert}{3}$ is even} \\
(s_{12}s_{23})^{\frac{\lvert g \rvert-1}{2}}s_{12} &\text{if $\frac{\lvert g \rvert}{3}$ is odd}
\end{cases} \\
\varphi_0([132]_{\frac{\lvert g \rvert+1}{3}})=&
\begin{cases}
(s_{12}s_{23})^{\frac{3\frac{\lvert g \rvert+1}{3}-2}{2}}s_{12} & \text{if $\frac{\lvert g \rvert+1}{3}$ is even} \\
(s_{12}s_{23})^{\frac{3\frac{\lvert g \rvert+1}{3}-1}{2}} &\text{if $\frac{\lvert g \rvert+1}{3}$ is odd}
\end{cases}\\
=&
\begin{cases}
(s_{12}s_{23})^{\frac{\lvert g \rvert-1}{2}}s_{12} & \text{if $\frac{\lvert g \rvert+1}{3}$ is even} \\
(s_{12}s_{23})^{\frac{\lvert g \rvert}{2}} &\text{if $\frac{\lvert g \rvert+1}{3}$ is odd}
\end{cases}
\end{align*}
The above equations imply that we get the elements corresponding to the elements in ${J_3}^{\{2\}}$ for all word length determined by generators $s_{12}$, $s_{23}$. 
Therefore, $\varphi_0$ is surjective.

We next confirm that, for any $g \in PJ_3$, there exists $\tilde{g} \in \pi_1 (\overline{X(4)} )$ such that 
\begin{equation}\label{eq1}    \varphi_0\circ(\widetilde{\Gamma}_0)_{\tilde{g}}=(\Gamma_0)_g\circ\varphi_0
\end{equation}
holds. 

Let $g=(s_{12}s_{13})^{3j}$ be an element in $PJ_3$ with an integer $j$. 
Take the element $\lambda^j$ in $\pi_1 (\overline{X(4)} )$ as  $\tilde{g} \in \pi_1 (\overline{X(4)} ) $ corresponding to $g$.
Then, the following hold for any integer $i$. 
\begin{align*}
\varphi_0(\left(\widetilde{\Gamma}_0\right)_{\lambda^j}([213]_i))&=\varphi_0([213]_{i+j})\\
&=\begin{cases}
(s_{12}s_{23})^{\frac{3(i+j)}{2}}s_{12} & \text{if $i+j$ is even} \\
(s_{12}s_{23})^{\frac{3(i+j)+1}{2}} &\text{if $i+j$ is odd}
\end{cases}\\
\varphi_0(\left(\widetilde{\Gamma}_0\right)_{\lambda^j}([123]_i))&=\varphi_0([123]_{i+j})\\
&=
\begin{cases}
(s_{12}s_{23})^{\frac{3(i+j)}{2}} & \text{if $i+j$ is even} \\
(s_{12}s_{23})^{\frac{3(i+j)-1}{2}}s_{12} &\text{if $i+j$ is odd}
\end{cases}\\
\varphi_0(\left(\widetilde{\Gamma}_0\right)_{\lambda^j}([132]_i))&=\varphi_0([132]_{i+j})\\
&=
\begin{cases}
(s_{12}s_{23})^{\frac{3(i+j)-2}{2}}s_{12} & \text{if $i+j$ is even} \\
(s_{12}s_{23})^{\frac{3(i+j)-1}{2}} &\text{if $i+j$ is odd}
\end{cases}
\end{align*}

On the other hand, for $g=(s_{12}s_{13})^{3j}$ in $PJ_3$, we obtain the following. 
\begin{align*}
\left(\Gamma_0\right)_{(s_{12}s_{13})^{3j}}(\varphi_0 ([213]_i))
&=
\begin{cases}
\left( \Gamma_0 \right)_{{(s_{12}s_{13})^{3j}}}((s_{12}s_{23})^{\frac{3i}{2}}s_{12}) & \text{if $i$ is even} \\
\left( \Gamma_0 \right)_{{(s_{12}s_{13})^{3j}}}((s_{12}s_{23})^{\frac{3i+1}{2}}) &\text{if $i$ is odd}
\end{cases} \\
&=
\begin{cases}
(s_{12}s_{23})^{\frac{3(i+j)}{2}}s_{12} & \text{if $i$, $j$ are even} \\
(s_{12}s_{23})^{\frac{3(i+j)+1}{2}} &\text{if $i$ is even, and $j$ is odd}\\
(s_{12}s_{23})^{\frac{3(i+j)+1}{2}} &\text{if $i$ is odd, and $j$ is even}\\
(s_{12}s_{23})^{\frac{3(i+j)}{2}}s_{12} &\text{if $i$, $j$ are odd}
\end{cases}\\
&=
\begin{cases}
(s_{12}s_{23})^{\frac{3(i+j)}{2}}s_{12} & \text{if $i+j$ is even} \\
(s_{12}s_{23})^{\frac{3(i+j)+1}{2}} &\text{if $i+j$ is odd}
\end{cases}
\end{align*}

\begin{align*}
\left(\Gamma_0\right)_{(s_{12}s_{13})^{3j}}(\varphi_0 ([123]_i))
&=
\begin{cases}
\left(\Gamma_0\right)_{(s_{12}s_{13})^{3j}}((s_{12}s_{23})^{\frac{3i}{2}}) & \text{if $i$ is even}\\
\left(\Gamma_0\right)_{(s_{12}s_{13})^{3j}}((s_{12}s_{23})^{\frac{3i-1}{2}}s_{12}) &\text{if $i$ is odd}
\end{cases} \\
&=
\begin{cases}
(s_{12}s_{23})^{\frac{3(i+j)}{2}} & \text{if $i$, $j$ are even} \\
(s_{12}s_{23})^{\frac{3(i+j)-1}{2}}s_{12} &\text{if $i$ is even, and $j$ is odd}\\
(s_{12}s_{23})^{\frac{3(i+j)-1}{2}}s_{12} &\text{if $i$ is odd, and $j$ is even}\\
(s_{12}s_{23})^{\frac{3(i+j)}{2}} &\text{if $i$, $j$ are odd}
\end{cases}\\
&=
\begin{cases}
(s_{12}s_{23})^{\frac{3(i+j)}{2}} & \text{if $i+j$ is even} \\
(s_{12}s_{23})^{\frac{3(i+j)-1}{2}}s_{12} &\text{if $i+j$ is odd}
\end{cases}
\end{align*}

\begin{align*}
\left(\Gamma_0\right)_{(s_{12}s_{13})^{3j}}(\varphi_0 ([132]_i))
&=
\begin{cases}
\left(\Gamma_0\right)_{(s_{12}s_{13})^{3j}}((s_{12}s_{23})^{\frac{3i-2}{2}}s_{12}) & \text{if $i$ is even} \\
\left(\Gamma_0\right)_{(s_{12}s_{13})^{3j}}((s_{12}s_{23})^{\frac{3i-1}{2}}) &\text{if $i$ is odd}
\end{cases} \\
&=
\begin{cases}
(s_{12}s_{23})^{\frac{3(i+j)-2}{2}}s_{12} & \text{if $i$, $j$ are even} \\
(s_{12}s_{23})^{\frac{3(i+j)-1}{2}} &\text{if $i$ is even, $j$ is odd}\\
(s_{12}s_{23})^{\frac{3(i+j)-1}{2}} &\text{if $i$ is odd, $j$ is even}\\
(s_{12}s_{23})^{\frac{3(i+j)-2}{2}}s_{12} &\text{if $i$, $j$ are odd}
\end{cases}\\
&=
\begin{cases}
(s_{12}s_{23})^{\frac{3(i+j)-2}{2}}s_{12} & \text{if $i+j$ is even} \\
(s_{12}s_{23})^{\frac{3(i+j)-1}{2}} &\text{if $i+j$ is odd}
\end{cases}
\end{align*}
\noindent
Thus, the equation~\eqref{eq1} holds. 
\end{proof}

\bigskip

\begin{proof}[Proof of Corollary~\ref{cor}]

Let $h : PJ_3 \to \pi_1 (\overline{X(4)} )$ sending $g=(s_{12}s_{13})^{3j}$ in $PJ_3$ to $h(g) = \lambda^j$ in $\pi_1 (\overline{X(4)} )$. 
Then, due to Theorem~\ref{thm}, this $h$ is a homomorphism. 
Precisely, by the equation~\eqref{eq1}, together with Proposition~\ref{prop}, we have 
\begin{align*}
    \left(\widetilde{\Gamma}_0\right)_{h(g_1 g_2)}
     &=  \varphi_0^{-1} \circ \left( \Gamma_0 \right)_{g_1 g_2} \circ \varphi_0 \\
     &=  \varphi_0^{-1} \circ \left( \Gamma_0 \right)_{g_1} \circ \left( \Gamma_0 \right)_{g_2} \circ \varphi_0 \\
     &=  \varphi_0^{-1} \circ \left( \Gamma_0 \right)_{g_1} \circ \varphi_0 \circ \varphi_0^{-1} \circ \left( \Gamma_0 \right)_{g_2} \circ \varphi_0 \\
    &= \left(\widetilde{\Gamma}_0\right)_{h(g_1)} \circ \left(\widetilde{\Gamma}_0\right)_{h(g_2)}.
\end{align*}
This implies that $h$ is a homomorphism. 

To show that $h$ is an isomorphism, equivalently, $h$ is a bijection, it suffices to show that the map $h' : \pi_1 (\overline{X(4)}) \to PJ_3$ sending $\lambda^j$ in $\pi_1 (\overline{X(4)})$ to the element $(s_{12} s_{13} )^{3k} $ in $PJ_3$ is a homomorphism, as they satisfy $h \circ h' = h' \circ h$ is the identity map. 
It is done in the same way as for $h$ above. 
\end{proof}

\bibliographystyle{amsplain}
\bibliography{reference}
\end{document}